\DeclareMathOperator{\Aut}{Aut}
\DeclareMathOperator{\Map}{Map}
\DeclareMathOperator{\Ad}{Ad}
\DeclareMathOperator{\Eq}{Eq}
\DeclareMathOperator{\ev}{ev}
\DeclareMathOperator{\pr}{pr}
\DeclareMathOperator{\Det}{Det}
\theoremstyle{plain}
\newtheorem{theorem}{Theorem}[section]
\newtheorem{proposition}[theorem]{Proposition}
\theoremstyle{definition}
\newtheorem{definition}[theorem]{Definition}
\theoremstyle{remark}
\newtheorem{example}{Example}[section]
\newcommand{\cC}{{\mathcal C}}
\renewcommand{\cH}{{\mathcal H}}
\newcommand{\cG}{\mathcal{G}}
\newcommand{\cA}{{\mathcal A}}
\newcommand{\cF}{{\mathcal F}}
\renewcommand{\cL}{{\mathcal L}}
\newcommand{\cP}{{\mathcal P}}
\newcommand{\BB}{{\mathbb B}}
\newcommand{\RR}{{\mathbb R}}
\newcommand{\ZZ}{{\mathbb Z}}
\newcommand{\PP}{{\mathbb P}}
\renewcommand{\AA}{{\mathbb A}}
\renewcommand{\a}{\alpha}
\renewcommand{\b}{\beta}
\renewcommand{\c}{\gamma}
\renewcommand{\d}{\delta}
\newcommand{\vps}{\vphantom{*}}
\newcommand{\DD}{D \mspace{-1.5mu}D}
\begin{document}

\title[The FMS anomaly and  lifting bundle gerbes]{The Faddeev--Mickelsson--Shatashvili anomaly \\ and lifting bundle gerbes}

 \author[P. Hekmati]{Pedram Hekmati}
  \address[P. Hekmati]
  {School of Mathematical Sciences\\
  University of Adelaide\\
  Adelaide, SA 5005 \\
  Australia}
  \email{pedram.hekmati@adelaide.edu.au}

 \author[M. K. Murray]{Michael K. Murray}
  \address[M. K. Murray]
  {School of Mathematical Sciences\\
  University of Adelaide\\
  Adelaide, SA 5005 \\
  Australia}
  \email{michael.murray@adelaide.edu.au}
  
  \author[D. Stevenson]{Danny Stevenson}
\address[D. Stevenson]
{School of Mathematics and Statistics\\
University of Glasgow  \\
15 University Gardens\\
Glasgow G12 8QW\\
United Kingdom}
\email{danny.stevenson@glasgow.ac.uk}

 \author[R. F. Vozzo]{Raymond F. Vozzo}
  \address[R. F. Vozzo]
  {School of Mathematical Sciences\\
  University of Adelaide\\
  Adelaide, SA 5005 \\
  Australia}
  \email{raymond.vozzo@adelaide.edu.au}

\date{\today}

\thanks{The authors acknowledge the support of the Australian Research Council's Discovery 
Project Scheme and the Engineering and Physical Sciences Research Council. PH was partially supported under DP0878184, MM under DP0769986 and RV  under DP1092682. DS was supported by the Engineering and Physical Sciences Research Council [grant number EP/I010610/1]. We thank David Roberts for useful discussions and the name \lq structure group bundle' and 
Professor Nikita Nekrasov for clarifying remarks on  the history of the topic.}

\subjclass[2010]{53C08, 57R22, 81T13}

%
%

\keywords{lifting bundle gerbe,  Faddeev--Mickelsson--Shatashvili anomaly, gauge group}

\begin{abstract}
In gauge theory, the Faddeev--Mickelsson--Shatashvili anomaly ari\-ses as a prolongation problem for the action of the gauge group on a bundle of projective Fock spaces. In this paper, we study this anomaly from the point of view of bundle gerbes and give several equivalent descriptions of the obstruction. These include lifting bundle gerbes with non-trivial structure group bundle and bundle gerbes related to the caloron correspondence.
\end{abstract}

\maketitle

\tableofcontents

\section{Introduction}
\label{sec:intro}

The Faddeev--Mickelsson--Shatashvili (FMS) anomaly arises in Hamiltonian quantisation of massless chiral 
fermions interacting with external gauge potentials. It signals the breakdown of local gauge 
symmetry in the quantum theory, which is required for identifying gauge 
equivalent fermionic Fock spaces and thereby removing unphysical degrees of freedom. 

The anomaly manifests itself in a variety of ways. Historically it first 
appeared as an anomalous term in the equal-time commutators of Gauss law generators \cite{FS84, Fad84, Mic85}. 
Globally this is due to the fact that the gauge group $\cG$ acts only projectively on the bundle 
of  Fock spaces parametrized by the space $\cA$ of gauge connections. It lifts to an honest action 
of an extension of the gauge group by the abelian group of circle-valued functions on $\cA$. 

In more detail, following the mathematical description given by Segal in  \cite{Seg},
we consider the chiral Dirac operator $D_{\! A}$ on a compact odd-dimensional Riemannian spin manifold,
coupled to a connection  $A \in \cA$.
This is an operator with discrete spectrum and a dense domain inside the Hilbert space $H$ of spinors.
Let $\cA_0 \subset \cA \times \RR$ be the subspace of all
pairs $(A, s)$ where $s$ is not in the spectrum of the Dirac operator. For every such pair $(A, s)$ the Hilbert space of spinors decomposes into the direct sum of the subspace $H^+_{(A, s)}$ spanned by the eigenspaces of $D_{\! A}$ for eigenvalues greater than $s$, and its orthogonal complement $H^-_{(A, s)}$. This splitting determines the vacuum in the fermionic Fock space
$$
F_{(A, s)} = \bigwedge H^+_{(A, s)} \otimes \bigwedge (H^-_{(A, s)})^*.
$$
Ideally one would like the Fock spaces to patch together to form a Hilbert bundle over $\cA$, but there is a phase ambiguity related to different choices of the parameter $s$. Indeed, if we leave $A$ fixed and consider another   $t>s$ not in 
the spectrum of $D_{\! A}$, then
$$
H = H^-_{(A, s)}\oplus V_{(A,s,t)} \oplus  H^+_{(A, t)}
$$
where $V_{(A,s,t)}$ is the finite dimensional vector space spanned by the eigenspaces for eigenvalues
between $s$ and $t$. This corresponds to shifting the vacuum level from $s$ to $t$. Moreover 
$$
H^+_{(A, s)} =  V_{(A,s,t)} \oplus H^+_{(A, t)} \quad\mbox{and}\quad
H^-_{(A, t)}  = H^-_{(A, s)} \oplus V_{(A,s,t)},
$$
and since $\bigwedge V^*_{(A,t,s)}\otimes\det V_{(A,t,s)} $
is canonically isomorphic to $\bigwedge V_{(A,t,s)}$, it follows that the Fock spaces are isomorphic
up to a phase
\begin{equation}\label{E:F=FxdetV}
F_{(A,s)} \simeq F_{(A,t)}\otimes \det V_{(A, s,t)}.
\end{equation}
The projectivizations $\PP(F_{(A,s)})$ and $\PP(F_{(A,t)})$ on the other hand can be identified for all $s, t \in \RR$ and descend to a projective bundle $\cP $ on $\cA$. We note that if the spectrum of the Dirac operator had a mass gap $(-m,m)$, it would be possible to fix a global vacuum level at $s=0$ for all connections. In the massless case however, the spectral flow of $D_{\! A}$ makes the vacuum section $A \to F_{(A, s)} $ discontinuous for any fixed $s$. The discontinuities occur exactly at those points $A\in\cA$ where an eigenvalue of $D_{\! A}$ crosses $s$. This makes it impossible to set a vacuum level once and for all, and one must instead resort to the local description above which gives rise to a projective bundle.

The FMS anomaly is tied to the question of whether or not there is a Hilbert bundle $\cH$ over the moduli space 
$\cA/\cG$ whose projective bundle is isomorphic to $\cP/\cG$. This question can be phrased in two equivalent ways. 
Firstly we note that $\cP \to \cA$ is always the projective bundle of a Hilbert bundle $\cH$ over $\cA$
because $\cA$ is contractible.  However to make $\cH$ a bundle on $\cA/\cG$ we need to lift the group action of $\cG$ to $\cH$ and the obstruction to that is a (locally smooth) group 2-cocycle with values in ${\Map}(\cA,U(1))$.
Equivalently the problem can be tackled directly on $\cA/\cG$ where it is well-known that the obstruction to a projective
bundle over $\cA/\cG$ being the projectivisation of a Hilbert bundle is a class in $H^3(\cA/\cG, \ZZ)$. The image of this class in real cohomology was first computed in \cite{CarMicMur97} using the Atiyah--Patodi--Singer index theorem. It was further shown in \cite{CarMur94}
that these two approaches are related by transgression. Namely the transgression of the three class in question yields a Lie algebra two-cocycle which is the derivative of the group cocycle.

Central to the discussions in \cite{CarMicMur97} was a line bundle $\Det \to  \cA_0$ and an associated short 
exact sequence formed from the automorphisms of $\Det$,
\begin{equation}
\label{eq:intro-exact}
1 \to \Map(\cA, U(1)) \to \widehat\cG \to \cG \to 1.
\end{equation}
  The primary purpose of this paper is to explain the 
observation (Proposition \ref{prop:motivate}) that  the FMS class vanishes if and  only if the $\cG$-bundle 
$\cA \to \cA/\cG$ lifts to a $\widehat \cG$-bundle.   Firstly, although \eqref{eq:intro-exact} is not a central extension of groups, we can 
apply the recently developed theory of the second author \cite{Mur} to characterise the obstruction 
to lifting $\cA \to \cA/\cG$ as a bundle gerbe on $\cA/ \cG$ with non-trivial structure group bundle.  The structure group bundle in question is a bundle
of abelian groups $\AA = \cA \times_\cG \Map(\cA, U(1))$ and the obstruction to the lift is the Dixmier--Douady class
of the bundle gerbe which is an element of 
the cohomology group
$
H^2(\cA/\cG,  \AA).
$
Secondly we relate this lifting bundle gerbe to the FMS gerbe from \cite{CarMur96} and show that 
their Dixmier--Douady invariants are related by isomorphisms
\begin{equation}
\label{eq:intro-iso}
H^2(\cA/\cG,  \AA) \simeq H^2(\cA/\cG,  U(1)) \simeq H^3(\cA/ \cG, \ZZ),
\end{equation}
which explains Proposition \ref{prop:motivate}. We also show how earlier results of the authors on the caloron correspondence \cite{HekMurVoz} can be used to construct a so-called caloron bundle gerbe, associated to the extension \eqref{eq:intro-exact}, which is stably isomorphic to the FMS gerbe.  

The paper is organised as follows. In Section \ref{S:FM gerbe} we review the theory of $U(1)$-bundle gerbes and the construction of the FMS gerbe from \cite{CarMur96}. We then establish our motivating result, Proposition \ref{prop:motivate} referred to above. In Section \ref{sec:lifting} we review the theory of bundle gerbes with non-trivial structure group bundle from \cite{Mur}, in particular we develop the theory of the lifting bundle gerbe of an abelian extension. In Section \ref{S:ev}  we establish the relationship between the lifting bundle gerbe with non-trivial structure group bundle and the FMS bundle gerbe. Finally in Section \ref{S:caloron gerbe} we discuss the caloron bundle gerbe and remark on open problems for which this point of view may be helpful.

\section{The Faddeev--Mickelsson--Shatashvili bundle gerbe}\label{S:FM gerbe}

\subsection{Bundle gerbes}
\label{sec:bg}

	We give a brief introduction to bundle gerbes here and refer the interested reader to \cite{Mur10} for an introduction and  \cite{Mur96Bundle-gerbes, MurSte00Bundle-gerbes:-stable} for further details.

	Let  $\pi \colon Y \to M$ be a surjective submersion.\footnote{Note that in \cite{Mur96Bundle-gerbes} $Y$ was required to be a fibration, however this is not actually necessary and we shall need the more general setup later.}
	  Let $Y^{[p]} \subset Y^p$ denote the fibre product, that is $(y_1, \dots, y_p) \in Y^{[p]}$ if and only if $\pi(y_1) = \pi(y_2) = \cdots =\pi(y_p)$. 
In the context of bundle gerbes it will be useful to call $Y$ the {\em object space} of the bundle gerbe.  For each $i = 1, \dots, p$
we  have the projection $\pi_i \colon Y^{[p]} \to Y^{[p-1]}$ which omits the $i^{\text{th}}$ element in the $p$-tuple.  In particular $\pi_1(y_1, y_2) = y_2$ and $\pi_2(y_1, y_2) = y_1$. 
If  $U \subset M$ is an open subset it will be useful to introduce the 
notation $Y_U  = \pi^{-1}(U) \subset Y$ for  the restriction of $Y$ to $U$.

If $Q$ and $R$ are two $U(1)$-bundles we define their \emph{product} $Q \otimes R$ to be  the quotient
of the fibre product of $Q$ and $R$ by the $U(1)$ action $(q, r)z = (qz, rz^{-1})$, with the induced right action 
of $U(1)$ on equivalence classes being given by $[q, r]w= [q, rw] = [qw, r]$. 
In addition if $P$ is a $U(1)$-bundle  we denote by $P^*$ the $U(1)$-bundle with the same total space as $P$ but with the action of $U(1)$ changed to its inverse, thus if 
$u\in P^*$ and $z\in U(1)$ then $z$ acts on $u$ by sending it to $uz^{-1}$.  We call $P^*$ the {\em dual} of $P$.

If $Q \to Y^{[p]}$ is a $U(1)$-bundle we define a new $U(1)$-bundle $\delta(Q) \to Y^{[p+1]}$
by
$$
\delta(Q) = \pi_1^*(Q) \otimes \pi_2^*(Q)^* \otimes \pi_3^*(Q) \otimes \cdots.
$$
It is straightforward to check that $\delta(\delta(Q)) $ is canonically trivial as a $U(1)$-bundle.

We then have the following definition.

\begin{definition}
A \emph{bundle gerbe} over $M$ is a pair $(P, Y)$ where $Y \to M$ 
is a surjective submersion and $P \to Y^{[2]}$ is a $U(1)$-bundle   satisfying the following two conditions:
\begin{enumerate}
\item There is a \emph{bundle gerbe multiplication} which is a smooth isomorphism
$$
m \colon \pi_3^*(P) \otimes \pi_1^*(P) \to \pi_2^*(P)
$$
of $U(1)$-bundles over $Y^{[3]}$.    
  \item This multiplication is associative, namely if   $P_{(y_1, y_2)}$ denotes the fibre of $P$ over $(y_1, y_2)$ then    the 
following diagram commutes for all $(y_1, y_2, y_3, y_4) \in Y^{[4]}$.

$$
\xymatrix{
P_{(y_1, y_2) } \otimes P_{(y_2, y_3) } \otimes P_{(y_3, y_4) }  \ar[r] \ar[d] & P_{(y_1, y_3) } \otimes P_{(y_3, y_4) }  \ar[d]\\
P_{(y_1, y_2) } \otimes P_{(y_2, y_4) } \ar[r] &  P_{(y_1, y_4) }\\
}
$$

\end{enumerate}
\end{definition}

We will find it convenient to depict a bundle gerbe with a diagram of the form:

$$
\xymatrix{
P \ar[d] &\\
Y^{[2]} \ar@<1ex>[r] \ar@<-1ex>[r] & Y \ar[d]\\
	&M
}
$$

Two bundle gerbes $(P, Y)$ and $(Q, Y)$ are called {\em isomorphic} if there is a $U(1)$-bundle isomorphism $P \simeq Q$
commuting with the bundle gerbe multiplication.

\begin{example}
If $Q \to Y$ is a $U(1)$-bundle then we can form $\delta(Q) \to Y^{[2]}$ as above. This has a natural bundle gerbe  multiplication given by 
the contraction
$$ 
c \colon Q_{y_2}  \otimes Q_{y_1}^* \otimes   Q_{y_3} \otimes Q_{y_2}^* \to  Q_{y_3}  \otimes Q_{y_1}^*.
$$ 

\end{example}

More generally a bundle gerbe $(P,Y)$ over $M$ is said to be \emph{trivial} if  there is a $U(1)$-bundle
$Q \to Y$ such that $(P,Y)$ is 
isomorphic to $ (\delta(Q),Y)$.  We call $Q$ and the isomorphism $\delta(Q) \simeq P$ a \emph{trivialisation} of $P$. Any two trivialisations of $P$ are related by tensoring with the pullback of a $U(1)$-bundle on $M$.

If $(P, Y)$ is a bundle gerbe over $M$ and $f \colon N \to M$, we can pullback $Y$ to a surjective submersion $f^*(Y) \to N$. 
We have an induced map $\hat f \colon (f^*(Y))^{[2]} = f^*(Y^{[2]}) \to Y^{[2]}$. The bundle gerbe product pulls 
back to $f^*(P) = \hat f^*(P) $ to define the pullback bundle gerbe $(f^*(P) , f^*(Y))$. 

If $(P,Y)$ is a bundle gerbe over $M$ then we can form the \emph{dual} bundle gerbe $(P^*,Y)$ by setting $P^*\to Y^{[2]}$ to be the dual of the $U(1)$-bundle $P$ in the sense 
described earlier.  The process of forming duals commutes with taking pullbacks and forming tensor products, so the bundle gerbe multiplication on $P$ 
induces a bundle gerbe multiplication on $P^*$ in a canonical way.  

Given two bundle gerbes $(P,Y)$ and $(Q,X)$ over $M$, we can form a new bundle gerbe $(P\otimes Q,Y\times_M X)$ over the same base called 
the \emph{tensor product} of $P$ and $Q$.  Here the surjective submersion 
is  the fibre product $Y\times_M X\to M$ and $P\otimes Q$ is the $U(1)$-bundle on $(Y\times_M X)^{[2]}$ whose fibre at $((y_1,x_1),(y_2,x_2))$ is given 
by 
$$ 
P_{(y_1,y_2)}\otimes Q_{(x_1,x_2)}.   
$$
The bundle gerbe multiplication on $P\otimes Q$ is defined in the obvious way, using the bundle gerbe multiplications on $P$ and $Q$.  

Every bundle gerbe $(P,Y)$ over $M$ has a characteristic class  $\DD(P,Y) \in H^3(M,{\mathbb Z})$   called the \emph{Dixmier--Douady} class.      We construct it in terms of \v{C}ech cohomology as follows.     Choose a good cover \cite{BotTu} ${\mathcal U} = \{U_\alpha\}$ of $M$   with 
sections $s_\alpha \colon U_\alpha \to Y$ of $\pi\colon Y\to M$.      Then 
$$
(s_\alpha, s_\beta) \colon U_\alpha \cap U_\beta \to Y^{[2]}
$$
is a section.      Choose sections $\sigma_{\alpha\beta} $ of $P_{\alpha\beta} = (s_\alpha, s_\beta)^*(P)$. These are maps
$$
\sigma_{\alpha\beta} \colon U_\alpha \cap U_\beta \to P
$$
with  $\sigma_{\alpha\beta}(x) \in P_{(s_\alpha(x), s_\beta(x))}$.      Over triple overlaps we have 
$$
m(\sigma_{\alpha\beta}(x), \sigma_{\beta\gamma}(x)) =  \sigma_{\alpha\gamma}(x) g_{\alpha\beta\gamma}(x)\in P_{(s_\alpha(x), s_\gamma(x))}
$$
for  $g_{\alpha\beta\gamma} \colon U_\alpha \cap U_\beta \cap U_\gamma \to U(1)$.       This defines a 
cocycle  which represents the Dixmier--Douady class
$$
\DD(P, Y) = [g_{\alpha\beta\gamma}] \in H^2(M, U(1)) \simeq H^3(M, {\mathbb Z}).
$$
Here $H^2(M,U(1))$ denotes the \v{C}ech cohomology of $M$ with coefficients in the sheaf 
of germs of maps from $M$ into $U(1)$.  
The Dixmier--Douady class of $P$ is the obstruction to $(P,Y)$ being trivial. That is,  $\DD(P,Y) = 0$  if and only if $(P,Y)$ is isomorphic to a trivial 
bundle gerbe.  Note also that the Dixmier--Douady class satisfies  $\DD(P\otimes Q,Y\times_M X) = 
\DD(P,Y) + \DD(Q,X)$ and $\DD(P, Y) = - \DD(P^*, Y)$. 

Two bundle gerbes $(P, Y)$ and $(Q, X)$ over $M$ are said to be {\em stably isomorphic} if the bundle gerbe $(P \otimes Q^*, Y\times_M X)$ is trivial or, equivalently if $\DD(P, Y) = \DD(Q, X)$ \cite{MurSte00Bundle-gerbes:-stable}. 

We note here a standard result that will be needed later.

\begin{proposition}[\cite{MurSte00Bundle-gerbes:-stable}]\label{P:stable isos}
Let  $X \to M$ and $\pi \colon Y \to M$ be two surjective submersions and $\mu \colon X \to Y$ a map commuting 
with the projections to $M$.  Denote by $\mu \colon X^{[2]} \to Y^{[2]}$ the induced map. If $(P, Y)$ is 
a bundle gerbe on $M$, then $(\mu^*(P), X)$ is also a bundle gerbe which is stably 
isomorphic to $(P, Y)$.
\end{proposition}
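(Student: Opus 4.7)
The plan is to first verify that $(\mu^*(P), X)$ carries a natural bundle gerbe structure, then establish the stable isomorphism with $(P, Y)$. For the first part, the map $\mu$ over $M$ induces fibre product maps $\mu^{[p]} \colon X^{[p]} \to Y^{[p]}$ for every $p$, and these intertwine the face maps $\pi_i$. Pulling back the $U(1)$-bundle $P \to Y^{[2]}$ along $\mu^{[2]}$ produces $\mu^*(P) \to X^{[2]}$, and pulling back the multiplication $m$ along $\mu^{[3]}$ yields a natural multiplication $\pi_3^*(\mu^*(P)) \otimes \pi_1^*(\mu^*(P)) \to \pi_2^*(\mu^*(P))$ on $X^{[3]}$. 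Associativity over $X^{[4]}$ is automatic, being the pullback of the associativity diagram for $(P, Y)$.

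For the stable isomorphism I would appeal to the Dixmier--Douady characterisation. Choose a good cover $\{U_\alpha\}$ of $M$, refined if necessary so that $X \to M$ admits local sections $t_\alpha \colon U_\alpha \to X$; then $s_\alpha := \mu \circ t_\alpha$ are sections of $Y \to M$. Since $\mu^{[2]} \circ (t_\alpha, t_\beta) = (s_\alpha, s_\beta)$, the pullback $(t_\alpha, t_\beta)^* \mu^*(P)$ is literally equal to $(s_\alpha, s_\beta)^* P$. Choosing a single family of local sections $\sigma_{\alpha\beta}$ therefore produces the \emph{same} \v{C}ech cocycle $\{g_{\alpha\beta\gamma}\}$ for both bundle gerbes, yielding $\DD(\mu^*(P), X) = \DD(P, Y)$ and hence stable isomorphism by the characterisation recalled just before the proposition.

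A more concrete route, which I would carry out in parallel as a sanity check, is to construct an explicit trivialisation of the tensor product bundle gerbe $(\mu^*(P) \otimes P^*, X \times_M Y)$. Setting $R_{(x,y)} := P^*_{(\mu(x), y)}$ (which is defined because $\pi_Y(\mu(x)) = \pi_X(x) = \pi_Y(y)$), two applications of the bundle gerbe product give
\[ P_{(\mu(x_1), \mu(x_2))} \otimes P_{(\mu(x_2), y_2)} \simeq P_{(\mu(x_1), y_2)} \simeq P_{(\mu(x_1), y_1)} \otimes P_{(y_1, y_2)}, \]
and comparing these factorisations produces the required isomorphism $\delta(R) \simeq \mu^*(P) \otimes P^*$. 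Compatibility with the bundle gerbe multiplications is a direct consequence of associativity of $m$. The main obstacle throughout is purely notational: keeping careful track of duals, the ordering convention for $\delta$, and the direction of contraction maps, with no conceptual difficulty beyond the functoriality of pullbacks and repeated application of the bundle gerbe product.
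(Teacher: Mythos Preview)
Your argument is correct. Note, however, that the paper does not actually supply its own proof of this proposition: it is stated with a citation to \cite{MurSte00Bundle-gerbes:-stable} and used as a known result, so there is nothing in the present paper to compare against directly.

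That said, both of your approaches are standard and sound. The \v{C}ech argument is the quickest route given the paper's definition of the Dixmier--Douady class: once you observe that $s_\alpha = \mu \circ t_\alpha$ are admissible local sections of $Y$, the two cocycle computations literally coincide. Your explicit trivialisation via $R_{(x,y)} = P^*_{(\mu(x),y)}$ is essentially the construction that appears in the cited reference, and is the more robust one (it does not rely on the equivalence between vanishing Dixmier--Douady class and triviality, and it produces an actual stable isomorphism rather than merely an equality of invariants). Your remark that the only subtlety is bookkeeping with the $\delta$ and dual conventions is accurate; with the paper's convention $\pi_1(y_1,y_2)=y_2$, $\pi_2(y_1,y_2)=y_1$, one has $\delta(R)_{((x_1,y_1),(x_2,y_2))} = R_{(x_2,y_2)}\otimes R^*_{(x_1,y_1)}$, and the identification with $\mu^*(P)\otimes P^*$ goes through exactly as you wrote.
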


Finally notice that everything we have said here generalises if $U(1)$ is replaced by any abelian topological group $A$. The only 
modification required is that the Dixmier--Douady class is in the \v{C}ech cohomology group $H^2(M, A)$ rather than $H^2 (M, U(1))$.

\subsection{The Faddeev--Mickelsson--Shatashvili bundle gerbe}\label{SS:FM gerbe}

The space $\cA_0$ introduced in Section \ref{sec:intro}
projects onto  the space $\cA$ of connections. This projection
is clearly onto and a submersion because locally $\cA_0$ is a
product of an open set in $\cA$ and an open set in $\RR$.
Note though that  $\cA_0 \to \cA$ is not a fibration
because it fails to be  locally trivial near points of $\cA$
for which $D_{\! A}$ has degenerate eigenvalues.

The fibre product $\cA_0^{[2]}$ can be identified with the set of 
triples $(A, s, t)$ where neither of the real
numbers $s$ and $t$ are in the spectrum of $D_{\! A}$.
We define a line bundle $\cF$ over $\cA_0^{[2]}$  pointwise by defining
its fibre to be 
\begin{equation*}
	\cF_{(A,  s, t)} = \begin{cases} \det V_{(A,s, t)} & \text{if} \ s \leq t \\ 
	\det V_{(A,s,t)}^* & \text{if} \  s \geq t ,\end{cases}
\end{equation*}
where as before $V_{(A, s, t)}$ is the sum of all the eigenspaces for eigenvalues between $s$ and $t$. The first two
conditions for a bundle gerbe follow naturally. For the
third consider $r < s < t$ then
$$
V_{(A, r, s)} \oplus V_{(A,s,  t)} = V_{ (A, r, t)}
$$
so that
$$
\det(V_{(A, r, s)}) \otimes \det( V_{(A, s, t)}) =  \det(V_{(A, r, t)})
$$
giving us the bundle gerbe multiplication
$$
\cF_{(A, r, s)} \otimes  \cF_{(A, s, t)} \simeq  \cF_{(A, r, t)}.
$$
We call the bundle gerbe $(\cF, \cA_0)$ the \emph{trivial Faddeev--Mickelsson--Shatashvili (FMS) bundle
gerbe on $\cA$. }

We know that $(\cF, \cA_0)$ is trivial because $\cA$ is contractible. In \cite{CarMur96}
an explicit trivialisation was constructed using a determinant line bundle 
over an infinite-dimensional Grassmannian.  The details of that construction are not important 
in what follows.  We denote this trivialisation by $\Det \to \cA_0$ 
and note that we must have 
\begin{equation}
\label{eq:FMtriv}
\cF^{\vps}_{(A, s, t)} \simeq \Det_{(A, s)}^* \otimes \Det^{\vps}_{(A, t)}.
\end{equation}
From  equation (\ref{E:F=FxdetV}) we have
$$
F_{(A, s)} \simeq F_{(A, t)} \otimes \cF_{(A, s, t)}
$$
and hence 
$$
F_{(A, s)}\otimes \Det_{(A, s)}  \simeq F_{(A, t)} \otimes \Det_{(A, t)}.
$$
This means that the Hilbert bundle $F \otimes \Det \to \cA_0$ descends to a Hilbert bundle $\cH$ on $\cA$ and
clearly we have $\PP(\cH) \simeq \cP$.
Therefore if the bundle gerbe $\cF$ is trivial, then the
projective bundle $\cP$ is the
projectivisation of a Hilbert bundle. The converse is easily seen to be true \cite{CarMur94}.

So far everything we have said is happening on $\cA$ where
the  bundle gerbe is trivial.  However $\cG$
clearly acts on $\cF$ and it descends to a bundle
gerbe $(\cF/\cG, \cA_0 / \cG)$  on $\cA/ \cG$, which we call the \emph{Faddeev--Mickelsson--Shatashvili  bundle gerbe}, or simply the \emph{FMS bundle gerbe}.
A trivialisation of the bundle gerbe on $\cA/\cG$ is
therefore equivalent to a $\cG$-equivariant trivialisation
of the bundle gerbe over $\cA$.  As $\cA$ is contractible any two trivialisations of a bundle gerbe 
are isomorphic, so we conclude that the FMS bundle gerbe is trivial if and only if $\Det \to \cA_0$ 
admits an action of $\cG$ covering the action on $\cA_0$ and compatible with the isomorphisms $\cF \simeq \delta(\Det)$. 
 Denote by $\widehat\cG$ the 
group of all pairs $(\psi, g)$ where $\psi \in \Aut(\Det)$ is a right bundle automorphism covering the right action 
of $g \colon \cA_0 \to \cA_0$ and preserving the trivialisation $\delta(\Det) \simeq \cF$.  We have an exact sequence
\begin{equation}
\label{eq:exact}
1 \to \Map(\cA, U(1)) \to \widehat\cG \to \cG \to 1
\end{equation}
which admits local sections. We conclude that the FMS bundle gerbe is trivial if and only if this exact sequence splits. 

These observations enable us to prove the following Proposition which is the motivation for our constructions below. 

\begin{proposition}
\label{prop:motivate}
The principal $\cG$-bundle $\cA \to \cA/\cG$ lifts to a $\widehat\cG$-bundle if and only if $(\cF/\cG,  \cA_0/\cG)$ is trivial.
\end{proposition}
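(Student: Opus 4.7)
The plan is to leverage the observation already established in the text that the FMS bundle gerbe is trivial if and only if the short exact sequence
\begin{equation*}
1 \to \Map(\cA, U(1)) \to \widehat \cG \to \cG \to 1
\end{equation*}
admits a splitting as a sequence of topological groups. The proposition will then reduce to showing that $\cA \to \cA/\cG$ lifts to a principal $\widehat \cG$-bundle if and only if this extension splits.

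For the \emph{if} direction, I would use the standard associated bundle construction. Given a splitting $\sigma \colon \cG \to \widehat \cG$, form $\wt \cA = \cA \times_\cG \widehat \cG$ with $\cG$ acting on $\widehat \cG$ by left multiplication through $\sigma$ and $\widehat \cG$ acting on itself from the right. This is a principal $\widehat \cG$-bundle over $\cA/\cG$ whose quotient by $\Map(\cA, U(1))$ is $\cA \times_\cG \cG = \cA$, exhibiting a lift of $\cA \to \cA/\cG$.

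For the converse, I would start with a lift $\wt \cA \to \cA/\cG$. Then $\wt \cA \to \cA$ is a principal $\Map(\cA, U(1))$-bundle over the contractible base $\cA$, and so admits a global section $s \colon \cA \to \wt \cA$. Since the right $\widehat \cG$-action on $\wt \cA$ covers the right $\cG$-action on $\cA$, for each $a \in \cA$ and $g \in \cG$ there is a unique element $\sigma_a(g) \in \widehat \cG$ over $g$ with $s(a) \cdot \sigma_a(g) = s(ag)$, and the associativity of the action gives the cocycle relation $\sigma_a(g_1 g_2) = \sigma_a(g_1) \sigma_{a g_1}(g_2)$. Modifying $s$ by a gauge transformation $f \colon \cA \to \Map(\cA, U(1))$ transforms $\sigma_a(g)$ to $f(a)^{-1} \sigma_a(g) f(ag)$.

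The main obstacle is to choose $f$ so that the resulting cocycle is constant in $a$, giving an honest group homomorphism $\sigma \colon \cG \to \widehat \cG$ splitting the extension. This straightening uses the contractibility of $\cA$ in an essential way, effectively reducing the non-abelian cocycle on $\cA \times \cG$ to its value at a single basepoint, where it can be absorbed into the gauge freedom. Once $\sigma$ has been produced the splitting is a splitting of the extension, and combined with the preceding equivalence between splittings and trivialisations of the FMS gerbe the proposition will follow.
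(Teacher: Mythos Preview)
Your proposal has a genuine gap in the converse direction. You correctly reach the cocycle $\sigma_a(g)$ satisfying $\sigma_a(g_1)\sigma_{ag_1}(g_2)=\sigma_a(g_1g_2)$, and you recognise that making it independent of $a$ is the crux. But the claim that this straightening can be achieved ``using the contractibility of $\cA$ in an essential way'' is not justified, and in fact contractibility alone is insufficient. Consider the integer Heisenberg group $\widehat G$, a non-split central extension of $\ZZ^2$ by $U(1)$: the real Heisenberg group is a $\widehat G$-bundle over $T^2$ whose quotient by $U(1)$ is $\RR^2$, so the $\ZZ^2$-bundle $\RR^2\to T^2$ lifts to $\widehat G$ even though the extension does not split. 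Thus the cocycle produced from any section cannot be straightened, despite $\RR^2$ being contractible. Your abstract scheme, which uses nothing beyond contractibility of the base and the cocycle identity, cannot distinguish this situation from the FMS one.

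What the paper does instead is exploit the concrete realisation of $\widehat\cG$ as bundle automorphisms of $\Det\to\cA_0$, together with the fact that the kernel $\Map(\cA,U(1))$ acts on the fibre $\Det_{(A,s)}$ by evaluation at $A$. Rather than forcing $\phi(A,g)$ (their $\sigma_a(g)$) to be constant in $A$, they assemble these $A$-dependent automorphisms into a single action of $\cG$ on $\Det$ by declaring $l\cdot g = l\,\phi(A,g)$ for $l\in\Det_{(A,s)}$; since any two lifts $\phi(A,g)$, $\phi(A',g)$ differ by an element of $\Map(\cA,U(1))$, which acts on $\Det_{(A,s)}$ through its value at $A$, this ``diagonal'' assignment is globally a well-defined automorphism of $\Det$ covering $g$ and preserving $\delta(\Det)\simeq\cF$. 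The cocycle identity then gives the group law, and one lands back at the equivariant-trivialisation criterion for triviality of the FMS gerbe (equivalently, the splitting of \eqref{eq:exact}). The special feature being used is precisely that the abelian kernel is the mapping group $\Map(\cA,U(1))$ with its evaluation pairing against $\cA$---the same structure that later underlies the isomorphism $H^2(\cA/\cG,\AA)\simeq H^2(\cA/\cG,U(1))$---and not merely the contractibility of $\cA$.
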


\begin{proof}
We have already seen that $(\cF/\cG,  \cA_0/\cG)$ is trivial if and only if \eqref{eq:exact} splits. 
Clearly if that occurs we have a homomorphism $\cG \to \widehat \cG$ which enables us to lift $\cA \to \cA / \cG$
to a $\widehat \cG$-bundle.

Conversely assume that $\cA$ can be lifted to a $\widehat\cG$-bundle $\widehat\cA \to \cA/\cG$. Then $\widehat\cA \to \cA$ 
is a $\Map(\cA, U(1))$-bundle which must be trivial.  Let $s \colon \cA \to \widehat\cA$ be a section and
let $A \in \cA$ and $g \in \cG$. We denote the action of $\cG$ on $\cA$ by $A \mapsto A^g$. Consider $s(A)$ and $s(A^g)$. As the following diagram commutes,
\begin{equation*}  
\xy 
(-15,7.5)*+{\widehat\cA}="1"; 
(10,0)*+{\cA}="2"; 
(-15,-7.5)*+{\cA/\cG}="3"; 
{\ar_p "1";"2"}; 
{\ar_s@/_1pc/ "2";"1"}; 
{\ar_{\widehat\pi} "1";"3"}; 
{\ar_\pi "2";"3"}; 
 \endxy
\end{equation*}
we have $$\widehat\pi(s(A^g)) = \pi (p (s(A^g))) = \pi(A^g) = \pi(A) = \pi (p (s(A))) = \widehat\pi(s(A))$$ and hence $s(A^g) = s(A)\phi(A, g)$ where
$\phi \colon \cA \times \cG \to \widehat\cG$.  It is easy to see that $\phi(A, g) \phi(A^g, h) = \phi(A, gh)$.
We can now define an action of $\cG$ on $\Det$.  If $g \in \cG$ and $l \in \Det_{(A, s)}$, the fibre of $\Det$ over $(A, s) \in \cA_0$,   we  define $l g = l \phi(A, g)$.  It is straightforward to check that this is an action and hence \eqref{eq:exact} splits.
\end{proof}

\section{Lifting bundle gerbes for abelian extensions}
\label{sec:lifting}

Proposition \ref{prop:motivate} motivates us to attempt to understand the obstruction to lifting $\cA \to \cA/\cG$ to 
a $\widehat \cG$-bundle. In the case of a {\em central} extension 
$$
1 \to Z \to \widehat H \to H \to 1 
$$
it is well known that the obstruction to lifting an $H$-bundle $Y \to M$ to an $\widehat H$-bundle is a class in $H^2(M, Z)$. This can 
be interpreted as the Dixmier--Douady class of the so-called lifting bundle gerbe \cite{Mur96Bundle-gerbes} defined as follows. 
As $Y $ is a principal $H$-bundle we have a map $\tau \colon Y^{[2]} \to H$ defined by $y_1 \tau(y_1, y_2) = y_2$ and we can 
use this to pullback the $Z$-bundle $\widehat H \to H$. The resulting bundle has a bundle gerbe product induced by 
the group action of $\widehat H$.  It is straightforward to check that a trivialisation of the lifting
bundle gerbe, which is a  $Z$-bundle $\widehat Y \to Y$, is precisely a lift of $Y$ to $\widehat H$. 

The problem with applying the theory of lifting bundle gerbes outlined above is that the exact sequence \eqref{eq:exact} is not a central extension but only an abelian extension, that 
is $\Map(\cA, U(1))$ is a normal, abelian subgroup of $\widehat \cG$ but not in the centre of $\widehat \cG$.  Recent work 
of the second author \cite{Mur}, which we now review,  has shown that for such extensions the obstruction 
to lifting a bundle is a bundle gerbe with non-trivial structure group bundle.

\subsection{Lifting bundle gerbes with non-trivial structure group bundle}
\label{SS:non-constant lifting}

\subsubsection{Bundle gerbes with non-trivial structure group bundle}

Let $\AA \to M$ be a locally trivial bundle of abelian groups over $M$.  We call such objects abelian group bundles. We say that a fibre bundle $P \to M$ is a principal $\AA$-bundle
if each fibre of $P \to M$ is a principal space for the corresponding fibre of $\AA \to M$ and if whenever we locally trivialise
$\AA$ as $\AA_U = U \times A$, we have that $P_U$ is a locally trivial principal $A$-bundle.  In such a case we call $\AA$ the structure group bundle of $P$. Duals and products of $\AA$-bundles are defined fibre by fibre. It is  straightforward to  show that isomorphism classes of $\AA$-bundles are classified by the group $H^1(M, \AA)$, where here $\AA$  also denotes the sheaf of smooth sections of the group bundle $\AA$.  

We can generalise the definition of bundle gerbes to the case of non-trivial structure group bundles as follows.  Let $Y \to M$ be a submersion and $\AA \to M$ be a bundle
of abelian groups with fibre isomorphic to $A$.  Denote also by $\AA$ the pullback of $\AA$ to any of the fibre products 
$Y^{[p]} \to M$. The definition of an $\AA$-bundle gerbe is then an $\AA$-bundle $Q \to Y^{[2]} $ with the obvious
notion of a bundle gerbe product. The definition of the Dixmier--Douady class is analogous to the $U(1)$-bundle gerbe case: choose a
good cover  ${\mathcal U} = \{U_\alpha\}$ of $M$ and sections $s_\a \colon U_\a \to Y$. Let $\sigma_{\a\b} \colon U_\a \cap U_\b \to Q$
be sections of $Q$. Then 
$$
\sigma_{\a\b} \sigma_{\b\c} =  \sigma_{\a\c} g_{\a\b\c}
$$
where $g_{\a\b\c} \colon U_\a \cap U_\b \cap U_\c \to \AA$. The class of $g_{\a\b\c}$ in $H^2(M, \AA)$
is the Dixmier--Douady class.

\subsubsection{Changing the structure group bundle of a  bundle gerbe.} 
\label{sec:change}
Let $\phi \colon \AA \to \BB$ be a homomorphism of group bundles, that is, $\phi$ is a bundle map which is a homomorphism 
on fibres $\phi_m \colon \AA_m \to \BB_m$ and moreover this homomorphism on fibres is, up to isomorphisms, constant. 
If $(P, Y)$ is an $\AA$-bundle gerbe over $M$, then we can extend the structure group bundle to $\BB$ by defining an associated bundle 
$P \times_\AA \BB$ where the action of $\AA$ on the left of $\BB$ is induced by $\phi$.  It is straightforward
to check that the bundle gerbe product extends. We denote this bundle gerbe by $(\phi_*(P), Y)$.  The homomorphism $\phi$ induces a homomorphism 
of sheaves of smooth sections of $\AA$ and $\BB$ which induces a map 
$$
\phi_* \colon H^2(M, \AA) \to H^2(M, \BB)
$$
and it is a straightforward calculation to show that 
\begin{equation}
\label{eq:DD-change}
\DD((\phi_*(P), Y)) = \phi_*( \DD (P, Y)).
\end{equation}

\subsubsection{Lifting bundle gerbes and abelian extensions}
Consider an extension of groups 
$$
1 \to A \to \widehat H \stackrel{\pi}{\to} H \to 1 
$$
where  $A$ is abelian and normal in $\widehat H$ but possibly not central.

 Let $Y \to M$ be a principal $H$-bundle and  $\tau \colon Y^{[2]} \to H$ as before. Let $L = \tau^*(\widehat H)$ so that an element 
of $L_{(y_1, y_2)} $ is a triple $(y_1, y_2, \hat h)$ where $\hat h \in A\widehat \tau(y_1, y_2)$ and
$\hat \tau(y_1, y_2)$ is any lift of $\tau(y_1, y_2) \in H$ to $\widehat H$.  For convenience
we identify 
$$
L_{(y_1, y_2)} = A\widehat \tau(y_1, y_2).
$$
Because $A$ is normal, the product on $\widehat H$ restricts to a well-defined map
\begin{equation}
\label{eq:productL}
L_{(y_1, y_2)} \times L_{(y_2, y_3)} \to L_{(y_1, y_3)}.
\end{equation}

Notice that $H$ acts on $A$ because we can lift $h \in H$ to $\hat h \in \widehat H$ and define $h(a) = \hat h a \hat h^{-1}$
which is independent of the choice of lift as $A$ is abelian. This action is non-trivial exactly when $A$ is not central. 
As a result we can form a group bundle $\AA = Y \times_H A\to M$.

 We define an action of (the pullback of) $\AA$ on $L$
 as follows.  Let $l \in L_{(y_1, y_2)}$ and $[y_2, a] \in \AA$, and  define $l [y_2, a] = la$. Notice
 that if $\pi(\hat h ) = h$, then
 $$
 l [y_2  h, a] = l [y_2, h(a)] = l [y_2, \hat h a \hat h^{-1} ] = l \hat h a \hat h^{-1}.
 $$
 This makes $L$ into an $\AA$-bundle. If $l \in L_{(y_1, y_2)}$ and $l' \in L_{(y_2, y_3)}$, then using 
 the product \eqref{eq:productL} we have $l l' \in  l_{(y_1, y_3)}$. 
Moreover 
 \begin{align*}
l [y_2, a] (l' [y_2, a]^{-1})  &=  l [y_2, a] l' ([y_2, a^{-1}])\\ 
                                &=  l [y_2, a] l' ([y_3, \tau(y_2, y_3)^{-1}(a^{-1})]) \\
                                 &=  la l' \tau(y_2, y_3)^{-1}(a^{-1}) \\
                                 & = l l'
\end{align*}
because  $ \tau(y_2, y_3)^{-1}(a^{-1}) = (l')^{-1} a^{-1} l'$. This shows
that the product descends to make $(L, Y)$ an $\AA$-bundle gerbe which we call the \emph{lifting bundle gerbe} of $Y \to M$.

Before we prove the next Proposition we need to collect some facts about right principal $A$-spaces for an abelian 
group $A$. Let  $X$ be such a space and $X^*$ the dual space.  There is a well-defined map 
$$
X^* \times X \to A
$$
which  we write as $(\xi, x) \mapsto \xi(x)$, where $\xi(x) $ is defined by  $x = \xi\, \xi(x)$ bearing 
in mind that $X^*$ is the same set as $X$ but with the inverse $A$-action.   We have 
$x a = \xi \, \xi(x) a $ so that $\xi(xa) = \xi(x) a$, and $x = \xi\, a^{-1}\,  a \,\xi(x) = (\xi a)\, a\, \xi(x)$
which implies that $(\xi a)(x) = \xi(x) a$. 

Let  $X_1, X_2, X_3$ be right principal $A$-spaces and 
define the map
$$
c \colon X_2^{\vps} \otimes X_1^* \times  X_3^{\vps} \otimes X_2^* \to X_3^{\vps} \otimes X_1^* 
$$
by
$$
c(  x_2 \otimes \xi_1 , x_3 \otimes \xi_2 ) = ( x_3 \otimes \xi_1 )  \xi_2(x_2) .
$$
Notice that this satisfies $$c(  (x_2 \otimes \xi_1)a , (x_3 \otimes \xi_2)a^{-1} )
= c(  x_2 a \otimes \xi_1 , x_3 \otimes \xi_2 a) = c(  x_2 \otimes \xi_1 , x_3 \otimes \xi_2 )$$
and therefore descends to a map
$$
c \colon X_2^{\vps} \otimes X_1^* \otimes  X_3^{\vps} \otimes X_2^* \to X_3^{\vps} \otimes X_1^* .
$$
Moreover if $\a = x \otimes \zeta  \in X \otimes Z^*$ and $z \in Z$, then define $\a(z) = x \zeta(z)$.
In particular if $\a \in X_2^{\vps} \otimes X_1^*$, $\b \in X_3^{\vps} \otimes X_2^*$ and $x \in X_1$, we have 
\begin{equation}
\label{eq:reversal}
c(\a, \b)(x) =   \b ( \a (x)).
\end{equation}

  \begin{proposition}
 The lifting bundle gerbe $(L, Y) $ is a trivial $\AA$-bundle gerbe if and only if the bundle $Y \to M$ lifts
 to $\widehat H$.
 \end{proposition}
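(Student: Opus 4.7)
The plan is to prove both implications by showing that an $\AA$-bundle $Q \to Y$ trivialising $(L,Y)$ and a principal $\widehat H$-bundle lift $\widehat Y \to M$ of $Y$ encode the same data, under the set-level identification $Q = \widehat Y$.

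For the forward direction, suppose $Y$ admits a lift to a principal $\widehat H$-bundle $\widehat Y \to M$. I set $Q := \widehat Y$, viewed as a bundle over $Y$ via the quotient $\widehat Y/A = Y$. The right $A$-action on $\widehat Y$ promotes to an $\AA$-action by declaring $\hat y \cdot [y,a] := \hat y \cdot a$ for $\hat y$ above $y$; this respects the equivalence $[yh, \hat h^{-1} a \hat h] = [y,a]$ defining $\AA = Y \times_H A$, precisely because the $H$-action on $A$ used to form $\AA$ is conjugation with lifts. I then define $f : \delta(Q) \to L$ by $f(\hat y_2 \otimes \hat y_1^*) := \hat y_1^{-1}\hat y_2$, an element that projects to $\tau(y_1,y_2)\in H$ and so lies in $L_{(y_1,y_2)}$. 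Well-definedness on tensor product equivalence classes is a short computation using that $\hat y_1^* \cdot [y_2,a]$ in $Q^*$ equals $(\hat y_1 \hat\tau a^{-1} \hat\tau^{-1})^*$, after rewriting $[y_2,a] = [y_1, \hat\tau a \hat\tau^{-1}]$ with $\hat\tau := \hat y_1^{-1}\hat y_2$, and crucially uses that $A$ is normal in $\widehat H$. $\AA$-equivariance of $f$ is immediate, and compatibility with the bundle gerbe multiplication collapses to the telescoping identity $(\hat y_1^{-1}\hat y_2)(\hat y_2^{-1}\hat y_3) = \hat y_1^{-1}\hat y_3$ in $\widehat H$.

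For the converse, suppose $(L,Y)$ has a trivialisation $f : \delta(Q) \simto L$, and set $\widehat Y := Q$. I equip $\widehat Y$ with a right $\widehat H$-action by sending $(\hat y_1, \hat h)$, with $\hat y_1 \in Q_{y_1}$, to the unique $\hat y_2 \in Q_{y_1 \cdot \pi(\hat h)}$ satisfying $f(\hat y_2 \otimes \hat y_1^*) = \hat h$; existence and uniqueness are automatic from $f$ being an isomorphism of principal $\AA$-bundles. Associativity of the action is exactly the bundle gerbe multiplication axiom on $f$ combined with the product on $L$ induced by multiplication in $\widehat H$; the restriction to $A\subset \widehat H$ coincides with the underlying $\AA$-action on $Q$ via the natural identification $\delta(Q)_{(y,y)}\simeq \AA_{\pi(y)}$ and the compatibility of $f$ at the diagonal. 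Freeness and transitivity of the $\widehat H$-action on $M$-fibres of $\widehat Y$ follow from the corresponding properties of $Y \to M$ together with each fibre of $Q$ being a principal homogeneous $\AA$-space.

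The main obstacle is the forward direction: since $A$ is not central, the $\AA$-bundle structure on $\widehat Y$ and the tensor product equivalences in $\delta(Q)$ both involve the conjugation action of $H$ on $A$, and one must verify well-definedness carefully using the identities for dual and tensor products of principal $A$-spaces collected just before the proposition, in particular the reversal formula \eqref{eq:reversal}. Once this bookkeeping is handled, everything else reduces to identities in $\widehat H$.
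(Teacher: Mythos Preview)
Your proof is correct and follows essentially the same approach as the paper: you identify a lift $\widehat Y$ with a trivialising $\AA$-bundle $Q$ and construct the isomorphism $\delta(Q)\simeq L$ via the difference map $\hat y_1^{-1}\hat y_2$, which is precisely the inverse of the paper's map $\phi(\hat h)=\hat y\hat h\otimes\hat y^*$. One small remark: the reversal formula \eqref{eq:reversal} is actually what drives the associativity check in the \emph{converse} direction (your $(\hat y\hat h)\hat k=\hat y(\hat h\hat k)$), not the well-definedness in the forward direction, which is handled by the direct computation you indicate.
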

\begin{proof} 
 First assume that the bundle
 lifts to $\widehat Y \to M$ with a map $\widehat Y \to Y $ which is an $A$-bundle.  We make $\widehat Y \to Y$
 into an $\AA$-bundle by defining  $\hat y [ y, a]  = \hat y a $, where  $\hat y \in \widehat Y$ and $\pi(\widehat y) = y$.
 
 We need to define an isomorphism $\phi \colon  L \to \delta(\widehat Y)$  of  $\AA$-bundles over $Y^{[2]}$ which preserves the bundle gerbe multiplications.
 If $\hat h \in L_{(y_1, y_2)} = A \tau(y_1, y_2)$ we choose $\hat y \in \widehat Y_{y_1}$ and notice
 that $\hat y \hat h \in \widehat Y_{y_1 \tau(y_1, y_2)} = \widehat Y_{y_2}$. Define the map $\phi$ by 
 $$
\phi(  \hat h )   =   \hat y  \hat h  \otimes \hat y^*  \in  \widehat Y_{y_2}^{\vps}  \otimes \widehat Y_{y_1}^*  .         
$$
This is well-defined because changing $\hat y$ to $\hat y [y_1, a]$ gives
\begin{align*}
  (\hat y[y_1, a])  \hat h  \otimes (\hat y[y_1, a])^* &=  (\hat y[y_1, a])  \hat h  \otimes \hat y^* [y_1, a^{-1}]  \\
                   &=   \hat y a  \hat h [y_1, a^{-1}] \otimes \hat y^*   \\ 
                    &=   \hat y a  \hat h [y_2, \hat h^{-1} a^{-1} \hat h] \otimes \hat y^* \\
                    & =  \hat y a \hat h \hat h^{-1} a^{-1} \hat h  \otimes \hat y^* \\\
                    & =  \hat y \hat h \otimes \hat y^*.
                      \end{align*}
To check that this an $\AA$-bundle isomorphism we note that 
$ \hat h [y_2, a] = \hat h a $ so that 
$$
\phi(\hat h [y_2, a]) =   \hat y (\hat h a) \otimes \hat y^*  = ( \hat y \hat h ) a
\otimes \hat y^* = \hat y \hat h [y_2, a] \otimes \hat y^* = (\hat y \hat h  \otimes \hat y^*) [y_2, a].
$$
To see that $\phi$ preserves multiplication choose $\hat k \in L_{(y_2, y_3)} = A \tau(y_2, y_3)$ and 
  $\hat y \hat h \in \widehat Y_{y_2}$ so that
\begin{align*}
c(\phi(\hat h),  \phi (\hat k))  &= c(\hat y \hat h \otimes \hat y^* , ( (\hat y \hat h) \hat k ) \otimes (\hat y \hat h)^*) \\
                            &= ((\hat y \hat h) \hat k ) \otimes \hat y^* \\
                             &= \hat y (\hat h  \hat k ) \otimes \hat y^* \\
                             & = \phi(\hat h \hat k)
    \end{align*}
as required.  It follows that $\widehat Y \to Y$ is a trivialisation of $L$.

On the other hand assume that the lifting bundle gerbe $L$ is trivial. Namely, there is an $\AA$-bundle $p \colon \widehat Y \to Y$
and an $\AA$-bundle isomorphism $\phi \colon  L \to \delta(\widehat Y)$ which commutes with the bundle gerbe products. 
Consider $\hat h \in \widehat H$ and $\hat y \in \widehat Y_y$ so that  $p(\hat y) = y$. We want to define an action of 
$\hat h$ on the right of $\hat y$ sending it to an element in $\widehat Y_{y h}$.  Notice that since
$\hat h \in L_{(y, y h)}$ and
$$
\phi \colon L_{(y, yh)}^{\vps} \to \widehat Y_{yh}^{\vps} \otimes \widehat Y_{y}^*,
$$
we can define 
$$
\hat y \hat h = \phi(\hat h)(\hat y).
$$
We need to check that this is a right group action. Let $\hat k   \in \widehat H$ and 
consider 
$$
\phi \colon L_{(yh, yhk)}^{\vps} \to \widehat Y_{yhk}^{\vps} \otimes \widehat Y_{yh}^*.
$$
Using \eqref{eq:reversal}, we have
\begin{align*}
(\hat y \hat h) \hat k &= \phi(\hat k) (\hat y \hat h) \\
                     &= \phi(\hat k) ( \phi (\hat h) (\hat y)) \\
                     & = c( \phi(\hat h) , \phi(\hat k)) (\hat y) \\
                     & = \phi(\hat h \hat k) (\hat y)\\
                    & = \hat y (\hat h \hat k).
              \end{align*}
Hence $\widehat Y \to Y$ is a lift of $Y$ to an $\widehat H$-bundle as required.
\end{proof}
In the  FMS case the structure group bundle is given by
$$
\AA = \cA \times_\cG \Map(\cA, U(1))
$$
and we denote the lifting bundle gerbe for $\cA \to \cA/ \cG$ by $(\cL, \cA)$.

\section{The lifting bundle gerbe and the Faddeev--Mickelsson--Shatashvili gerbe}\label{S:ev}

Recall that we were motivated by Proposition \ref{prop:motivate} to find a relationship between the 
FMS bundle gerbe and the problem of lifting the $\cG$-bundle $\cA \to \cA/\cG$ to $\widehat \cG$.
We have now described the obstruction to this lifting problem as a bundle gerbe over $\cA/\cG$. However it is not  a 
$U(1)$-bundle gerbe like the FMS  gerbe, but a bundle gerbe with structure group bundle $\AA = \cA \times_\cG \Map (\cA, U(1))$. 
So the answer cannot be that the lifting bundle gerbe is stably isomorphic to the FMS bundle gerbe. 
On the other hand if we consider the Dixmier--Douady invariant of the lifting bundle gerbe, we see that 
\begin{align*}
H^2(\cA / \cG , \cA \times_\cG \Map (\cA, U(1))) &\simeq H^3(\cA / \cG , \cA \times_\cG \Map (\cA, \ZZ))\\
&\simeq H^3(\cA / \cG , \cA \times_\cG \ZZ)\\
&\simeq H^3(\cA / \cG , \ZZ),
\end{align*}
where we have used the exact sequence
$$
1 \to \Map (\cA, \ZZ) \to \Map (\cA, \RR) \to \Map (\cA, U(1)) \to 1
$$
for the first isomorphism, and the fact that $\cA$ is connected (so that $\Map (\cA, \ZZ) = \ZZ$) for the second. 
Hence the obstruction class to the lifting problem lives in the same space as the FMS class. 
Below we will show that these classes are in fact equal. This requires changing the structure group bundle from $\cA \times_{\cG} \Map (\cA, U(1))$ to the trivial $U(1)$ group bundle  using the ideas in Section \ref{sec:change}.

Consider the evaluation map 
$$
\ev \colon \cA \times \Map(\cA, U(1)) \to U(1).
$$
 This descends to a homomorphism of group bundles
$$
\ev \colon \cA \times_\cG \Map(\cA, U(1)) \to \cA  \times_\cG U(1) = \cA/\cG \times U(1) ,
$$ sending $[A,f]$ to $[A, f(A)] = (\pi(A),f(A))$.
Similarly we can use the evaluation map on $\RR$-valued and $\ZZ$-valued maps to form a commuting diagram
$$
\xymatrix{
1  \ar@{=}[d]\ar[r] & \ar[r] \cA/\cG \times \ZZ  \ar[r] \ar@{=}[d]  & \cA \times_\cG \Map (\cA, \RR) \ar[r]\ar[d]_\ev & \cA \times_\cG \Map (\cA, U(1)) \ar[r]\ar[d]_\ev & 1 \ar@{=}[d] \\
1 \ar[r]& \ar[r] \cA/\cG \times \ZZ  \ar[r] & \cA/\cG \times \RR \ar[r] & \cA/\cG \times U(1)  \ar[r] & 1
}
$$
Taking the cohomology of this diagram shows that $\ev_*$ is an isomorphism
$$
H^2(M, {\cA\times_\cG\Map(\cA,U(1))}) \xrightarrow{\ev_*}  H^2(\cA/\cG, U(1)) \xrightarrow{\sim} H^3(\cA/\cG, \ZZ).
$$
It follows from equation \eqref{eq:DD-change} that
$$
 \DD( \ev_*(\cL), \cA) = \ev_*(\DD(\cL, \cA))
$$
and because $\ev_*$ is an isomorphism, we have that $ \DD( \ev_*(\cL), \cA) = 0$ if and only if $\DD(\cL, \cA) = 0$. 
It remains to show that the $U(1)$-bundle gerbe $( \ev_*(\cL), \cA)$ is stably isomorphic to the FMS bundle gerbe, so we can conclude that  $\DD( \ev_*(\cL), \cA) $ is equal to the FMS class and thereby explain
Proposition \ref{prop:motivate}.  We wish to prove then the following:

\begin{theorem}\label{T:ev(L) = FM*}
The bundle gerbe $( \ev_*(\cL), \cA)$ is stably isomorphic to the FMS bundle gerbe $(\cF/\cG, \cA_0/\cG)$.
\end{theorem}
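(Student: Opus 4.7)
The plan is to pull both bundle gerbes back to a common submersion over $\cA/\cG$ and then trivialise their ratio using $\Det \to \cA_0$. Since the projections $p \colon \cA_0 \to \cA$ and $q \colon \cA_0 \to \cA_0/\cG$ both commute with the projections to $\cA/\cG$, Proposition \ref{P:stable isos} shows that the pullback gerbes $(p^*(\ev_*(\cL)), \cA_0)$ and $(q^*(\cF/\cG), \cA_0)$ are stably isomorphic to the originals, so it suffices to compare them on the common base $\cA_0 \to \cA/\cG$. A point of the fibre product $\cA_0^{[2]}_{\cA/\cG}$ has the form $((A,s),(Ag,t))$ for some $g \in \cG$ and some $s,t$ outside the spectrum of $D_{\! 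A}$, and the pulled-back fibres are respectively $\ev_*(\cL)_{(A,Ag)}$ (constant in $s,t$) and $\cF_{(A,s,t)} \simeq \Det_{(A,s)}^* \otimes \Det_{(A,t)}$ via the trivialisation \eqref{eq:FMtriv}.

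The next step is to describe $\ev_*(\cL)_{(A,Ag)}$ concretely in terms of $\Det$. An element $\psi \in \cL_{(A,Ag)}$ is a lift of $g$ to $\widehat\cG$, i.e.\ a bundle automorphism of $\Det$ covering the $g$-action on $\cA_0$ and preserving $\delta(\Det) \simeq \cF$. Its restriction to the fibre over $(A,s)$ gives a well-defined unit element $\hat\psi_s \in \Det_{(A,s)}^* \otimes \Det_{(Ag,s)}$. A direct computation shows that under the right action of $f \in \Map(\cA,U(1)) \subset \widehat\cG$ the element $\hat\psi_s$ is scaled by $f(Ag) = \ev([Ag,f])$, so the assignment $\psi \mapsto \hat\psi_s$ descends to a $U(1)$-bundle isomorphism
\begin{equation*}
\Psi_s \colon \ev_*(\cL)_{(A,Ag)} \simto \Det_{(A,s)}^* \otimes \Det_{(Ag,s)}
\end{equation*}
for each non-spectral $s$. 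Inserting the canonical factor $\Det_{(Ag,s)} \otimes \Det_{(Ag,s)}^*$ and recognising $\cF_{(Ag,s,t)} \simeq \Det_{(Ag,s)}^* \otimes \Det_{(Ag,t)}$ then rewrites the fibre of $\delta(\Det) \to \cA_0^{[2]}_{\cA/\cG}$ as
\begin{equation*}
\delta(\Det)_{((A,s),(Ag,t))} = \Det_{(Ag,t)} \otimes \Det_{(A,s)}^* \simeq \ev_*(\cL)_{(A,Ag)} \otimes \cF_{(Ag,s,t)}.
\end{equation*}
The canonical $\cG$-equivariance of $\cF$ identifies $\cF_{(Ag,s,t)}$ with $\cF_{(A,s,t)}$, i.e.\ with the fibre of $q^*(\cF/\cG)$, yielding an isomorphism $\delta(\Det) \simeq p^*(\ev_*(\cL)) \otimes q^*(\cF/\cG)$ of $U(1)$-bundles on $\cA_0^{[2]}_{\cA/\cG}$, which exhibits $\Det \to \cA_0$ as the trivialising bundle (up to taking an appropriate dual) and hence as the desired stable isomorphism.

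What remains is to verify that this isomorphism of $U(1)$-bundles intertwines the two bundle gerbe multiplications. On the $\ev_*(\cL)$-side the multiplication is induced from composition in $\widehat\cG$ pushed forward through $\ev$, while on $\cF/\cG$ it is induced from the natural multiplication $\cF_{(A,r,s)} \otimes \cF_{(A,s,t)} \to \cF_{(A,r,t)}$; both correspond under $\Psi_s$ to the composition of matrix entries $\hat\psi_r \cdot \hat\psi_s^{-1}$ inside $\delta(\Det)$, and compatibility is a direct consequence of the defining property of $\widehat\cG$ that $\psi$ preserves the trivialisation $\delta(\Det) \simeq \cF$. The main obstacle is careful bookkeeping of conventions—the choice of orbit representative for $q^*(\cF/\cG)$, the side on which $\Map(\cA,U(1))$ acts in $\widehat\cG$, and the order of tensor factors and duals—to ensure that the constructed isomorphism identifies $\ev_*(\cL)$ with $\cF/\cG$ itself rather than with its dual.
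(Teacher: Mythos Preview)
Your proposal is correct and follows essentially the same route as the paper: pull both gerbes back to the common object space $\cA_0$ over $\cA/\cG$ via the maps you call $p,q$ (the paper's $\alpha,\beta$), invoke Proposition~\ref{P:stable isos}, and then use the fact that an element $\psi\in\widehat\cG$ lifting $g$ restricts to an isomorphism $\Det_{(A,s)}\to\Det_{(Ag,s)}$ to build an explicit identification of $p^*(\ev_*(\cL))$ with $\delta(\Det)$ tensored with $q^*(\cF/\cG)$ (or its dual). The paper packages your $\Psi_s$ as a single map $\varphi([(A,\hat g),z],(A,s,t))=\hat g(A)\,z^{-1}$, and the $z^{-1}$ there is exactly what resolves the dual ambiguity you flag at the end: it converts the naive target $\delta(\Det)\otimes\beta^*(\cF/\cG)^*$ into $\delta(\Det)^*\otimes\beta^*(\cF/\cG)$ as $U(1)$-bundles, giving $\DD(\ev_*(\cL))=\DD(\cF/\cG)$ rather than its negative.
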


\begin{proof}

Consider the following diagram:

\begin{equation*} 
\label{lifting} 
\xy 
(-20,7)*+{\cA}="1"; 
(0,10)*+{\cA_0}="2"; 
(20,7)*+{\cA_0/\cG}="3"; 
(0,-7)*+{\cA/\cG}="4";
(-20,21)*+{\cA^{[2]}}="5"; 
(0,24)*+{\cA_0^{[2]}}="6"; 
(20,21)*+{(\cA_0/\cG)^{[2]}}="7"; 
(-25, 34)*+{\ev_*(\cL)} = "8";
(25, 34)*+{\cF/\cG} = "9";
{\ar "1";"4"};
{\ar^\pi "2";"4"}; 
{\ar "3";"4"};
{\ar@<1ex> "5";"1"};
{\ar@<-1ex> "5";"1"};
{\ar@<1ex>^{\pi_2} "6";"2"};
{\ar@<-1ex>_{\pi_1} "6";"2"};
{\ar@<1ex> "7";"3"};
{\ar@<-1ex> "7";"3"};
{\ar_\a "2";"1"};
{\ar^\b "2";"3"};
{\ar "6";"5"};
{\ar "6";"7"};
{\ar "8" ; "5"};
{\ar "9" ; "7"}
\endxy
\end{equation*}
We claim that 
the bundle gerbe $(\a^*(\ev_*(\cL)), \cA_0)$ is isomorphic to the bundle gerbe 
$(\pi_1^*(\Det)^* \otimes \pi_2^*(\Det) \otimes \b^*(\cF/\cG) , \cA_0) = 
(\delta (\Det)^* \otimes \b^*(\cF/\cG) , \cA_0)$.

To see this, take $\hat g \in \widehat \cG$ which projects to $g \in \cG$ and let $A \in \cA$ and $z \in U(1)$. An element of $\ev_*(\cL)$ is  given by a triple $[(A, \hat g), z] = [(A, f\hat g), f(A)z]$ for any $f\in \Map(\cA,U(1))$.  Notice that an element of $\cA_0^{[2]}$ takes the form $((A, s), (A^g, t))$ and 
an element of the fibre of  $\a^* \ev_*(\cL)$ over such a point is a pair $([(A, \hat g), z], (A, s, t))$, where $(A, s)$ and $(A, t)$ belong to $\cA_0$. 

On the other hand $\pi_1((A, s), (A^g, t)) = (A^g, t)$ and $\pi_2((A, s), (A^g, t)) = (A, s)$, so that  the fibre
of 
$$
(\pi_1^*(\Det)^* \otimes \pi_2^*(\Det) \otimes \b^*(\cF/\cG) , \cA_0)
$$
at the point $((A, s), (A^g, t))$ is 
$$
 \Det_{(A^g, t)}^* \otimes \Det_{(A, s)}^{\vps}  \otimes \cF_{(A^g, s, t)}^{\vps}.
$$
Recall that $\hat g \in \widehat{\cG}$ is an automorphism $\Det \to \Det$ covering the action of $g \in \cG$ on $\cA_0$. Therefore, for every $s$ not in the spectrum of $D_{\! A}$, we have $\hat g(A) \colon \Det_{(A, s)} \to \Det_{(A^g, s)}$ and hence fiberwise we can define a map
$$
\varphi([(A, \hat g), z], (A, s,t) ) \colon \Det_{(A, s)} \to \Det_{(A^g, s)} 
$$
by $ \varphi{([(A, \hat g), z], (A, s,t))}  := \hat g (A) z^{-1}$. This is well-defined
under change of representative,
$$
([f\hat g, A, f(A)z], (A, s,t)) \mapsto  f(A)\hat g(A) f(A)^{-1}z ^{-1}= \hat g(A) z^{-1},
$$ 
so we have 
$$
 \varphi([(A, \hat g), z], (A, s,t)) \in \Det_{(A^g, s)}^{\vps} \otimes \Det_{(A, s)}^* 
\simeq\Det_{(A^g, t)}^{\vps} \otimes \Det_{(A, s)}^* \otimes \cF_{(A^g, s, t)}^* 
$$
using equation \eqref{eq:FMtriv}. We conclude that
$$
\varphi \colon \a^* (\ev_* (\cL)) \to \d (\Det) \otimes \b^*(\cF /\cG)^*
$$
as a map of spaces. However, note that the action of $z \in U(1)$ on $\a^* (\ev_* (\cL))$ gets mapped to the action of $z^{-1}$ on $\d (\Det) \otimes \b^*(\cF /\cG)^*$, so in fact
$$
\varphi \colon \a^*( \ev_* (\cL)) \to \d (\Det)^* \otimes \b^*(\cF /\cG)
$$
as a map of $U(1)$-bundles. That $\varphi$ is a bundle gerbe isomorphism follows from the fact that the bundle gerbe multiplication on $\ev_*(\cL)$ is given by the group multiplication in $\widehat \cG$, which is composition of maps $\Det \to \Det$, and that the isomorphism $\d (\Det) \simeq \cF$ is itself a bundle gerbe isomorphism.

Since $(\b^*(\cF/ \cG), \cA_0)$ differs from $(\delta (\Det)^* \otimes \b^*(\cF/\cG) , \cA_0)$ by a trivial bundle gerbe, they have the same Dixmier--Douady class,
$$
\DD(\a^*(\ev_*(\cL)), \cA_0) = \DD (\delta (\Det)^* \otimes \b^*(\cF/\cG) , \cA_0) =  \DD(\b^*(\cF/\cG) , \cA_0 ).
$$
By Proposition \ref{P:stable isos}, $\DD(\a^*(\ev_*(\cL)), \cA_0) = \DD(\ev_*(\cL), \cA)$ and $\DD(\b^*(\cF/\cG) , \cA_0 ) = \DD(\cF/\cG , \cA_0 /\cG)$, thus
$$
\DD(\ev_*(\cL), \cA) = \DD(\cF/\cG , \cA_0 /\cG).
$$
\end{proof}

\section{Relation to the caloron correspondence}\label{S:caloron gerbe}

We conclude this paper by providing another description of the lifting bundle gerbe $(\ev_* (\cL),\cA)$ using the caloron correspondence \cite{HekMurVoz}. 
\subsection{The caloron bundle gerbe}
The caloron correspondence is a natural bijection between isomorphism classes of $G$-bundles $\widetilde P$ on a product manifold $M\times X$ and $\cG$-bundles on $M$. Here $\cG$ is the gauge group of an auxiliary principal $G$-bundle $Q$ on the compact connected manifold $X$ and it is assumed that $\widetilde P$ is of type $Q$, meaning that the restriction $\widetilde P|_{m}$ is isomorphic to $Q$ for all $m\in M$. 
In more detail, starting with a $\cG$-bundle $P$ the associated $G$-bundle $\widetilde P$ on $M\times X$ is defined by
\[
\widetilde P = (P\times Q)/\cG. 
\]
Going in the other direction, applying the functor of $G$-equivariant maps $\Eq_G(Q, \cdot)$ to $\widetilde P$ one shows that the resulting bundle is indeed a $\cG$-bundle on $M$.

Noting that $\Map(\cA, U(1))$ is the gauge group of the trivial bundle $\cA\times U(1)$, we can apply the caloron transform to the abelian extension $\widehat \cG \to\cG$ which produces a circle bundle $\cC$ over the fibre product $\cA^{[2]} =\cA \times \cG$,
\[
\cC= (\widehat \cG  \times \cA\times U(1))/\Map(\cA, U(1)). 
\]
 More importantly, the group law in $\widehat \cG$ induces a bundle gerbe multiplication on $\cC$. To see this, regard $\widehat \cG$ as a \em left \rm principal $\Map(\cA, U(1))$-bundle and define the product
 \[
 [\hat g_1, A, z_1]\cdot [\hat g_2, A^{g_1}, z_2] = [\hat g_1\hat g_2, A, z_1z_2], 
 \]
 which on fibres takes the form
\[
\cC_{(A,g_1)}\otimes \cC_{(A^{g_1},g_2)} \cong \cC_{(A,g_1g_2)}.
 \]
Changing representatives we get
\[
[f_1\hat g_1, A, z_1 f_1(A)]\cdot [f_2 \hat g_2, A^{g_1}, z_2 f_2(A^{g_1})] = [f_1 \hat g_1 f_2 \hat g_1^{-1} \hat g_1 \hat g_2, A, z_1 z_2 f_1(A)f_2(A^{g_1})].
\]
For the bundle gerbe multiplication to be well-defined we need that
\[
(f_1 (\hat g_1 f_2 \hat g_1^{-1}))(A) = f_1(A) f_2(A^{g_1}), 
\]
but this holds since the adjoint action of $\widehat \cG$ induces the left $\cG$-module structure on $\Map(\cA,U(1))$. Note that for the multiplication to work, it is essential that $\Map(\cA, U(1))$ acts on the left on $\widehat \cG$. We call $(\cC,\cA)$ the \emph{caloron bundle gerbe}\footnote{Note that in a recent paper \cite{BMW} the authors also use the term caloron bundle gerbe but in a different context. That paper considers a $G$-bundle over $M \times S^1$ and uses the caloron correspondence to get an $LG$-bundle over $M$, where $LG$ is the loop group of $G$. The caloron bundle gerbe in their setting is essentially the lifting bundle gerbe for the central extension of $LG$.
} and it is not hard to see that $(\ev_*(\cL), \cA)$ is isomorphic to $(\cC, \cA)$. Indeed if $\hat g \in \widehat \cG$, $A \in \cA$ and $z \in U(1)$, then an element of $\ev_*(\cL)$ is given by $[(A, \hat g), z] = [(A, f\hat g), f(A)z]$ and an element of $\cC$ is a triple $[\hat g, A, z] = [f \hat g, A, f(A)z]$. The isomorphism is simply given  by
$$
\ev_*(\cL) = \cL\times_{\ev}  U(1) \to\cC, \ \ \ [(A,\hat g),z]\mapsto [\hat g, A, z].
$$

\subsection{Conclusion}

The point of view explained above lends itself naturally to the study of the geometry of the FMS bundle gerbe in the following way. The strength of the caloron correspondence is that it holds not only at the level of bundles but also at the level of connections. More precisely, if $\widetilde P \to M \times X$ is a $G$-bundle of type $Q$ with connection, then the corresponding $\cG$-bundle $P \to M$ inherits a connection and also the additional data of a \emph{Higgs field}. The Higgs field is a section of $Q \times_\cG \cA$, where $\cA$ is the space of connections on $Q$. The result from \cite{HekMurVoz} implies that a pair given by a connection and a Higgs field determines the complete geometry of $P$ in the sense that there is an equivalence of categories between $G$-bundles of type $Q$ with connection and $\cG$-bundles with connection and Higgs field. In particular, this means that reciprocally one can   construct a connection on the $G$-bundle given a connection and Higgs field on the $\cG$-bundle.

An interesting open problem is to use this geometric caloron correspondence to give a bundle gerbe connection and curving on the caloron bundle gerbe (and hence the lifting bundle gerbe) using a connection and Higgs field on the abelian extension $\widehat \cG \to \cG$. In particular, the group $\widehat \cG$ carries a natural connection $\alpha$ defined by \cite{Hek10} 
$$
\alpha = \Ad_{\hat{g}^{-1}} \pr (\theta),
 $$
where $\theta$ is the right Maurer-Cartan form on $\widehat \cG$ and $\pr$ denotes the projection onto the Lie algebra of $\Map(\cA, U(1))$.

\end{document}